\documentclass[english,11pt]{amsart}
\usepackage{amsopn,amsthm,amsfonts,amsmath,amssymb,amscd}
\usepackage{babel}
\usepackage{nicematrix}
\usepackage{mathtools}

\newtheorem{Theorem}{Theorem}[section]
\newtheorem{Lemma}[Theorem]{Lemma}

\newtheorem{Corollary}[Theorem]{Corollary}
\newtheorem{Example}[Theorem]{Example}

\newcommand{\Tr}{\mathrm{tr}}

\newcommand{\OO}{\mathcal{O}}

\begin{document}

\title{Products of Nilpotent and Idempotent Matrices over Finite Local Rings}

\author{David Dol\v{z}an}
\address{Department of Mathematics, Faculty of Mathematics and Physics, University of Ljubljana, Jadranska 19, SI-1000 Ljubljana, Slovenia; and Institute of Mathematics, Physics and Mechanics, Jadranska 19, SI-1000 Ljubljana, Slovenia}
\email{david.dolzan@fmf.uni-lj.si}

\subjclass[2020]{15B33, 16U99, 16P10}
\keywords{Idempotent matrix, nilpotent matrix, 
finite local ring, matrix factorization}
\thanks{The author acknowledges the financial support from the Slovenian Research Agency  (research core funding No. P1-0222)}

\begin{abstract}
Let $R$ be a finite commutative local principal ring. We study
products of nilpotent and idempotent matrices in $M_2(R)$. We show
that every product of nilpotent and idempotent matrices is either a
product of idempotents or a product of nilpotents. We then consider
IN- and NI-matrices, that is, matrices which can be written as a
product of an idempotent and a nilpotent matrix in the respective
orders. We prove that the classes of IN- and NI-matrices in $M_2(R)$
coincide and give an explicit description of their common class.
Finally, if $|R|=q^n$ and $R/J(R)\cong GF(q)$, we show that
\[
    |\operatorname{IN}(M_2(R))|
    =
    |\operatorname{NI}(M_2(R))|
    =
    q^{3n-2}(q^n+q^2-1).
\]
\end{abstract}

\maketitle 

 \section{Introduction}

Products of special classes of elements in rings and matrix rings have
been studied from various points of view. In particular, products of
idempotent and nilpotent matrices arise naturally in factorization
problems. Products consisting exclusively of idempotents or exclusively of
nilpotents have been considered in the literature. See for example \cite{alahmadi1, alahmadi, caluga, dolzanid, erdos, howie,  jain, laffey, laffey1} for products of idempotents and
\cite{caluga, dolzannil} for products of nilpotents. In our recent study~\cite{dolzanid} of products of idempotents in
$M_2(R)$ over finite commutative local principal rings, the relevant
conjugacy classes were described and enumerated. It is therefore natural
to ask what happens when factors of both types are allowed to occur in
an arbitrary order. Products involving one idempotent and one nilpotent factor have
recently been studied over domains in \cite{caluga2}.

In this paper we study this question for $2$-by-$2$ matrices over a
finite commutative local principal ring $R$. Our first result shows that
allowing both types of factors does not produce any new matrices beyond
those already obtainable using only one of the two types. More
precisely, if $\mathcal I$ and $\mathcal N$ denote the sets of
idempotent and nilpotent matrices in $M_2(R)$, respectively, we prove
that
\[
    \langle\mathcal I\cup\mathcal N\rangle
    =
    \langle\mathcal I\rangle\cup\langle\mathcal N\rangle.
\]
Moreover, every such product containing a nontrivial idempotent factor
is a product of two idempotents.

We next consider more specifically products involving one idempotent
and one nilpotent factor. Following C\u{a}lug\u{a}reanu and Pop~\cite{caluga2},
an element which can be written as $EN$, where $E$ is idempotent and
$N$ is nilpotent, is called an IN-element, while a product $NE$ is
called an NI-element. In general, these two notions need not coincide;
see~\cite{caluga2}. We show that the situation is different
for $2$-by-$2$ matrices over finite commutative local principal rings:
every IN-matrix is an NI-matrix and conversely. More precisely, writing
\[
    M(a,b)=
    \begin{pmatrix}
        a&b\\
        0&0
    \end{pmatrix}
\]
and denoting its conjugacy orbit by $O_{M(a,b)}$, we obtain the
characterization
\[
    \operatorname{IN}(M_2(R))
    =
    \operatorname{NI}(M_2(R))
    =
    N(M_2(R))
    \cup
    \bigcup_{a,b\in R}O_{M(a,b)}.
\]
We also show that every matrix belonging to one of the orbits
$O_{M(a,b)}$ admits both a nontrivial IN-factorization and a
nontrivial NI-factorization.

Finally, this characterization allows us to determine the number of
IN- and NI-matrices. If $|R|=q^n$ and
$R/J(R)\cong GF(q)$, then
\[
    |\operatorname{IN}(M_2(R))|
    =
    |\operatorname{NI}(M_2(R))|
    =
    q^{3n-2}(q^n+q^2-1).
\]
The proof combines the above characterization with the description and
enumeration of the conjugacy orbits $O_{M(a,b)}$ and a count of
nilpotent matrices over finite local rings.

The paper is organized as follows. In Section~2 we recall the necessary
notation and basic facts concerning finite local principal rings and
introduce IN- and NI-matrices. Section~3 contains the main results on
arbitrary products of idempotent and nilpotent matrices, establishes
the equality of the classes of IN- and NI-matrices, and determines
their cardinality.

\bigskip

\bigskip

 \section{Definitions and preliminaries}
\bigskip

All rings in our paper will be finite rings with identity. 
For a ring $R$, $N(R)$ will denote the set of all its nilpotents. The group of units in $R$ will be denoted by $U(R)$ and the Jacobson radical of $R$ by $J(R)$. 

We call a vector $x=(x_1,\ldots,x_n)^T\in R^n$ unimodular if $Rx_1+\cdots+Rx_n=R$.
Let $e_i \in R^n$ denote the $i$-th standard basis vector, whose $i$-th component equals $1$ and all other components equal $0$.

Following \cite{caluga2}, we call an
element $a$ of a ring $R$ an \emph{IN-element} if
$a=EN$ for some idempotent $E\in R$ and some nilpotent $N\in R$.
Symmetrically, $a$ is called an \emph{NI-element} if
$a=NE$
for some nilpotent $N\in R$ and some idempotent $E\in R$.
For matrices, we shall also use the terms \emph{IN-matrix} and
\emph{NI-matrix}, respectively.
We call an IN-factorization $A=EN$ \emph{nontrivial} if $E$ is a
nontrivial idempotent and $N\neq0$. Similarly, an NI-factorization
$A=NE$ is called \emph{nontrivial} if $E$ is a nontrivial idempotent
and $N\neq0$.

We will denote the $2$-by-$2$ matrix ring with entries in a subring $S$ of the ring $R$ by $M_2(S)$, while the group of invertible matrices therein will be denoted by $GL_2(S)$.


For any $a, b \in R$, let $M(a,b)=\left(\begin{array}{cc}
a & b\\
0 & 0
\end{array}\right) \in M_2(R)$. The trace of a matrix $A \in M_2(R)$ will be denoted by $\Tr(A)$.

The group $GL_2(R)$ acts on $M_2(R)$ by conjugation. We will denote the orbit of an element $X \in M_2(R)$ for this action by $\OO_X$.

It follows from \cite[Theorem 2]{ragha} that every finite local ring has order $p^{nr}$ for some prime number $p$ and some integers $n, r$. Furthermore, the Jacobson radical $J(R)$ is of order $p^{(n-1)r}$ and the factor ring $R/J(R)$ is a field with $p^r$ elements (denoted $GF(p^r)$).

We also have the following lemma, which can be found in \cite{dolzanid}.

\begin{Lemma} \cite[Lemma 2.2]{dolzanid}
\label{pid}    
Let $R$ be a finite local principal ring of cardinality $q^{n}$, where $R/J(R) \cong GF(q)$. Then there exists $x \in J(R)$ such that $J(R)^l=(x^l)$ for every $l \in \{0,1,\ldots,n\}$. In particular, $|J(R)^l \setminus J(R)^{l+1}|=q^{n-l-1}(q-1)$ for every $l \in \{0,1,\ldots,n-1\}$.
\end{Lemma}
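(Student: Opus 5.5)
The plan is to use the standard structure theory of finite commutative local principal rings, with the second statement following from the first by counting.

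\emph{Step 1: find a generator of $J(R)$.} Since $R$ is principal, $J(R)=(x)$ for some $x\in J(R)$. Then $J(R)^l=(x)^l=(x^l)$ for every $l\ge 0$, and $J(R)^0=R=(1)$.

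\emph{Step 2: identify the successive quotients.} For each $l$, I will use the surjective $R$-module map
\[
\varphi_l\colon R\to J(R)^l/J(R)^{l+1},\qquad r\mapsto rx^l+J(R)^{l+1}.
\]
It is onto because $J(R)^l=(x^l)$. Its kernel contains $J(R)$, since $J(R)x^l\subseteq J(R)^{l+1}$. So $\varphi_l$ factors through the field $R/J(R)\cong GF(q)$. Hence each quotient $J(R)^l/J(R)^{l+1}$ is either $0$ or has exactly $q$ elements.

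\emph{Step 3: the chain is strictly decreasing until it reaches $0$.} Since $R$ is finite, $J(R)$ is nilpotent. If $J(R)^l=J(R)^{l+1}$ with $J(R)^l\neq 0$, then $J(R)^l=J(R)\cdot J(R)^l$, and Nakayama's lemma gives $J(R)^l=0$, a contradiction. So there is some $m$ with
\[
R=J(R)^0\supsetneq J(R)\supsetneq\cdots\supsetneq J(R)^m=0 .
\]
By Step 2 each of the $m$ quotients has order $q$, so $|R|=q^m$. Comparing with $|R|=q^n$ gives $m=n$. This proves $J(R)^l=(x^l)$ for all $l\in\{0,\dots,n\}$, with $J(R)^n=0$.

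\emph{Step 4: the count.} For $0\le l\le n-1$, the chain gives $|J(R)^l|=q^{n-l}$. Therefore
\[
|J(R)^l\setminus J(R)^{l+1}|=q^{n-l}-q^{n-l-1}=q^{n-l-1}(q-1).
\]

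The only step needing care is Step 3, the strict decrease of the chain. It rests on Nakayama's lemma, and it is what forces the number of steps to equal the exponent $n$ coming from $|R|=q^n$.
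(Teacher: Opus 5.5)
Your proof is correct. There is no in-paper argument to compare it with: the paper does not prove this lemma but imports it from \cite[Lemma 2.2]{dolzanid}. Your proposal therefore gives a self-contained justification of a fact the paper takes as given.

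Your route is the standard one for finite chain rings:
\begin{itemize}
\item principality of $J(R)=(x)$ makes every layer $J(R)^l/J(R)^{l+1}$ a cyclic module over the residue field $R/J(R)$, so each layer has order $1$ or $q$;
\item nilpotency of $J(R)$ forces the filtration to descend strictly to $0$;
\item $|R|=q^n$ then fixes the length at exactly $n$, which gives $|J(R)^l|=q^{n-l}$ and the stated count.
\end{itemize}

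Two small remarks.
\begin{itemize}
\item The first assertion, $J(R)^l=(x^l)$, is already immediate from Step~1, using the paper's standing commutativity assumption. The real content is that $J(R)^n=0$ with every layer of size exactly $q$, and you correctly place that in Steps~2--3.
\item In Step~3, Nakayama's lemma is more than you need. If $J(R)^l=J(R)^{l+1}$, then multiplying repeatedly by $J(R)$ gives $J(R)^l=J(R)^{l+k}$ for all $k\ge 0$. This vanishes for large $k$ by nilpotency.
\end{itemize}
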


%
%
%

\bigskip

 \section{Products of nilpotent and idempotent matrices}
\bigskip

\begin{Lemma}
\label{unimod}
Let $R$ be a finite commutative local principal ring. For every
nonzero vector $v=\begin{pmatrix}a\\b\end{pmatrix}\in R^2$
there exist $t\in R$, a unimodular vector $u\in R^2$ 
 and $P\in GL_2(R)$ such that $v=tu$, $Pu=\begin{pmatrix}1\\0\end{pmatrix}$ and
$Pv=\begin{pmatrix}t\\0\end{pmatrix}$.
\end{Lemma}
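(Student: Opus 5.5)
The plan is to use that $J(R)$ is principal, say $J(R)=(x)$ as in Lemma~\ref{pid}, so that every nonzero element of $R$ has the form $\varepsilon x^l$ with $\varepsilon\in U(R)$ and $0\le l<n$. Since $R$ is local, every element lies either in $U(R)$ or in $J(R)=(x)$. This means the ideals of $R$ form the chain $R=(x^0)\supsetneq(x)\supsetneq\cdots\supsetneq(x^n)=0$.

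First I extract the common factor. Write $a=\alpha x^{k}$ and $b=\beta x^{m}$ with $\alpha,\beta$ units; if one of $a,b$ is zero, drop the corresponding term. Since $v\neq 0$, at least one of them is nonzero. Without loss of generality $k\le m$ (or $b=0$), and otherwise I swap roles. Set $t=x^k$ and $u=\begin{pmatrix}\alpha\\ \beta x^{m-k}\end{pmatrix}$, with the second entry taken to be $0$ if $b=0$. Then $v=tu$, and $u$ is unimodular because its first entry $\alpha$ is a unit. In the symmetric case $m<k$, I take $t=x^m$ and $u=\begin{pmatrix}\alpha x^{k-m}\\ \beta\end{pmatrix}$, whose second entry is a unit.

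Next I construct $P$. If $u=\begin{pmatrix}u_1\\u_2\end{pmatrix}$ with $u_1$ a unit, I take
\[
P=\begin{pmatrix}u_1^{-1}&0\\ -u_2u_1^{-1}&1\end{pmatrix},
\]
which has determinant $u_1^{-1}\in U(R)$, so $P\in GL_2(R)$, and which sends $u$ to $\begin{pmatrix}1\\0\end{pmatrix}$. If instead $u_2$ is a unit, I first swap coordinates with $\begin{pmatrix}0&1\\1&0\end{pmatrix}$ and then apply the same construction. Finally, by linearity, $Pv=P(tu)=tPu=\begin{pmatrix}t\\0\end{pmatrix}$.

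I expect no real obstacle here. The only step that needs care is justifying that every element has the form unit times a power of $x$. This holds because an element of $(x^l)\setminus(x^{l+1})$ equals $rx^l$ with $r\notin J(R)$, and such an $r$ is a unit since $R$ is local. This fact is exactly what allows taking $t$ as the smaller of the two valuations, and it is where the principal hypothesis is used.
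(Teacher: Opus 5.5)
Your proof is correct and takes essentially the same route as the paper. Both factor out the coordinate that generates the larger of the two comparable principal ideals, then clear the other coordinate with a lower-triangular elementary matrix, swapping coordinates first if needed. The only difference is cosmetic: the paper takes $t$ to be that coordinate itself. If $b=sa$, this gives $u=(1,s)^T$ and $P=\begin{pmatrix}1&0\\-s&1\end{pmatrix}$, so no unit-times-power decomposition or diagonal rescaling is needed. You instead normalize $t$ to a power of $x$.
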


\begin{proof}
Since $R$ is a finite local principal ring, its ideals are linearly
ordered. Hence either $(a)\subseteq(b)$ or $(b)\subseteq(a)$.

Suppose first that $(b)\subseteq(a)$. Then $b=sa$ for some $s\in R$,
and therefore $v=
    \begin{pmatrix}a\\b\end{pmatrix}
    =
    a\begin{pmatrix}1\\s\end{pmatrix}$.
The vector $u=\begin{pmatrix}1\\s\end{pmatrix}$
is unimodular. Taking
\[
    P=
    \begin{pmatrix}
        1&0\\
        -s&1
    \end{pmatrix}\in GL_2(R),
\]
we obtain
$Pv=
    \begin{pmatrix}
        1&0\\
        -s&1
    \end{pmatrix}
    \begin{pmatrix}a\\sa\end{pmatrix}
    =
    \begin{pmatrix}a\\0\end{pmatrix}$.
Thus the assertion holds with $t=a$.

If $(a)\subseteq(b)$, we proceed similarly. We have $a=sb$ for some $s\in R$, so
$v=b\begin{pmatrix}s\\1\end{pmatrix}$.
Again, $(s,1)^T$ is unimodular, and taking
$P=
    \begin{pmatrix}
        0&1\\
        1&-s
    \end{pmatrix}\in GL_2(R)$
gives
$Pv=
    \begin{pmatrix}b\\0\end{pmatrix}$.
Hence the assertion holds with $t=b$.
\end{proof}

\begin{Lemma}
\label{prodcontid}
Let $R$ be a finite commutative local principal ring and let
$A,B\in M_2(R)$. If $E\in M_2(R)$ is a nontrivial idempotent, then
there exist $a,b\in R$ such that $AEB\in \OO_{M(a,b)}$.
\end{Lemma}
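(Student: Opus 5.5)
A nontrivial idempotent $E$ over a local ring has a standard form: it is conjugate to $\mathrm{diag}(1,0)$. The plan is to use this to write $AEB$ as a rank-one product $xy^T$ and then conjugate $x$ into the first coordinate direction with Lemma~\ref{unimod}.

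\textbf{Step 1: normal form of $E$.} Over a local ring, finitely generated projective modules are free. So $R^2=\operatorname{im}E\oplus\ker E$ with both summands free. Since $E$ is nontrivial, both summands are nonzero, and rank considerations (pass to $R/J(R)$) force each to have rank one. Hence there is $Q\in GL_2(R)$ with
\[
QEQ^{-1}=\begin{pmatrix}1&0\\0&0\end{pmatrix}=e_1e_1^T.
\]
Therefore $E=(Q^{-1}e_1)(e_1^TQ)$, and so
\[
AEB=xy^T,\qquad x=AQ^{-1}e_1,\quad y^T=e_1^TQB .
\]

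\textbf{Step 2: conjugate the column vector.} If $x=0$, then $AEB=0=M(0,0)$ and we are done. Otherwise apply Lemma~\ref{unimod} to $x$. This gives $P\in GL_2(R)$ and $t\in R$ with $Px=te_1$. Then
\[
P(AEB)P^{-1}=(Px)(y^TP^{-1})=t\,e_1z^T,\qquad z^T=y^TP^{-1}=(z_1,z_2).
\]
This matrix has second row zero and first row $(tz_1,\,tz_2)$, so it equals $M(tz_1,tz_2)$. Taking $a=tz_1$ and $b=tz_2$ gives $AEB\in\OO_{M(a,b)}$.

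\textbf{Main obstacle.} The only nontrivial point is Step 1, the conjugacy of $E$ to $\mathrm{diag}(1,0)$. If I want to avoid quoting the freeness of projectives, I would argue directly. Reduce modulo $J(R)$: $\bar E$ is a nontrivial idempotent over a field, so it has rank $1$, and some column $c$ of $E$ is unimodular. The vector $c$ lies in $\operatorname{im}E$, so $Ec=c$. Similarly some column $d$ of $I-E$ is unimodular, and $Ed=0$. The vectors $c$ and $d$ reduce to a basis of $GF(q)^2$, so the matrix with columns $c,d$ is invertible. With respect to this basis $E$ is $\mathrm{diag}(1,0)$.

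Note that commutativity is used in Step 2 to move the scalar $t$ past the row vector $z^T$.
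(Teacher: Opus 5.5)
Your proposal is correct and follows essentially the paper's proof: conjugate $E$ to $M(1,0)$, write $AEB$ as a column vector times a row vector, and use Lemma~\ref{unimod} to conjugate the column to $te_1$, which yields some $M(a,b)$. Your Step~1 justification of the conjugacy $E\sim\mathrm{diag}(1,0)$, which the paper only asserts, is also sound; it needs just the one-line check that $\bar E$ is nontrivial, namely that an idempotent with entries in $J(R)$ is nilpotent and hence $0$, applied to both $E$ and $I-E$.
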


\begin{proof}
Since $R$ is local, every nontrivial idempotent in $M_2(R)$ is
conjugate to $M(1,0)$. Hence there exists $P\in GL_2(R)$ such that
$E=P^{-1}M(1,0)P$.
Therefore
$AEB
    =P^{-1}\bigl((PAP^{-1})M(1,0)(PBP^{-1})\bigr)P$.
It thus suffices to show that
$CM(1,0)D\in \OO_{M(a,b)}$
for arbitrary $C,D\in M_2(R)$ and some $a,b\in R$.

Write $C=
    \begin{pmatrix}
        c_1&c_2\\
        c_3&c_4
    \end{pmatrix}$ and $D=
    \begin{pmatrix}
        d_1&d_2\\
        d_3&d_4
    \end{pmatrix}$.
Then
\[
    CM(1,0)D
    =
    \begin{pmatrix}
        c_1d_1&c_1d_2\\
        c_3d_1&c_3d_2
    \end{pmatrix}
    =
    \begin{pmatrix}c_1\\c_3\end{pmatrix}
    \begin{pmatrix}d_1&d_2\end{pmatrix}.
\]
If $c_1=c_3=0$, the assertion is immediate.
Otherwise, Lemma \ref{unimod} implies that there exist $t\in R$, a unimodular vector
$u\in R^2$ such that $\begin{pmatrix}c_1\\c_3\end{pmatrix}=tu$ and $Q\in GL_2(R)$ such that
$Qu=
    \begin{pmatrix}1\\0\end{pmatrix}$.
It follows that
\[
    Q\bigl(CM(1,0)D\bigr)Q^{-1}
    =
    t
    \begin{pmatrix}1\\0\end{pmatrix}
    \begin{pmatrix}d_1&d_2\end{pmatrix}Q^{-1}
    =
    \begin{pmatrix}
        a&b\\
        0&0
    \end{pmatrix}
    =M(a,b)
\]
for some $a,b\in R$. Hence
$CM(1,0)D\in \OO_{M(a,b)}$, and consequently
$AEB\in \OO_{M(a,b)}$.
\end{proof}

\begin{Theorem}
\label{th1}
Let $R$ be a finite commutative local principal ring. Denote by
$\mathcal I$ and $\mathcal N$ the sets of idempotent and nilpotent
matrices in $M_2(R)$, respectively. Then
\[
    \langle\mathcal I\cup\mathcal N\rangle
    =
    \langle\mathcal I\rangle\cup
    \langle\mathcal N\rangle.
\]
Moreover, if a product of elements of $\mathcal I\cup\mathcal N$
contains a nontrivial idempotent factor, then it is a product of two
idempotents.
\end{Theorem}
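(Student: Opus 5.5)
The plan is to sort products by whether they contain a nontrivial idempotent factor. Take a product $X=F_1F_2\cdots F_k$ with each $F_i\in\mathcal I\cup\mathcal N$. The idempotents $0$ and $I$ play no real role: if some factor equals $0$, then $X=0$, and $0$ is both a nilpotent and an idempotent. Factors equal to $I$ can simply be deleted. So we may assume every idempotent factor is nontrivial.

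If no idempotent factors remain, then $X$ is a product of nilpotents and lies in $\langle\mathcal N\rangle$. (If everything was deleted, $X=I\in\langle\mathcal I\rangle$.) This leaves the case where some factor $F_j=E$ is a nontrivial idempotent. Then $X=AEB$ with $A=F_1\cdots F_{j-1}$ and $B=F_{j+1}\cdots F_k$, and Lemma~\ref{prodcontid} gives $X\in\OO_{M(a,b)}$ for some $a,b\in R$. Membership in $\langle\mathcal I\rangle$, and in fact being a product of two idempotents, is invariant under conjugation. Hence the whole theorem, including the ``moreover'' part, reduces to showing that each $M(a,b)$ is a product of two idempotents.

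The key step is to exhibit explicit idempotents. For $M(a,b)$ I would try
\[
M(a,b)=\begin{pmatrix}1&b\\0&0\end{pmatrix}\begin{pmatrix}1&0\\ c&d\end{pmatrix}.
\]
Both factors have the form $\begin{pmatrix}1&x\\0&0\end{pmatrix}$ or $\begin{pmatrix}1&0\\y&0\end{pmatrix}$, and matrices of these forms are idempotent. Taking $d=0$, the product is
\[
\begin{pmatrix}1&b\\0&0\end{pmatrix}\begin{pmatrix}1&0\\ c&0\end{pmatrix}=\begin{pmatrix}1+bc&0\\0&0\end{pmatrix},
\]
which loses the $b$ entry, so I need a different pairing. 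Instead use
\[
\begin{pmatrix}1&0\\0&0\end{pmatrix}\begin{pmatrix}a&b\\ \ast&\ast\end{pmatrix}=\begin{pmatrix}a&b\\0&0\end{pmatrix}.
\]
It then suffices to find an idempotent whose first row is $(a,b)$. The matrix $\begin{pmatrix}a&b\\ c&1-a\end{pmatrix}$ has trace $1$, and it is idempotent exactly when its determinant is $0$, i.e.\ $a(1-a)=bc$. The case analysis is as follows.
\begin{itemize}
\item If $b$ is a unit, set $c=b^{-1}a(1-a)$.
\item If $b\in J(R)$ and $a$ is a unit with $1-a$ a unit, this equation can fail to be solvable. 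In that case I would conjugate first: since only the orbit matters, I would choose a representative of $\OO_{M(a,b)}$ that is easier to factor.
\item If $a\in J(R)$ or $1-a\in J(R)$, and $b$ is a nonunit, I would try idempotents of the form $M(1,x)$ or $M(0,\cdot)$ combined with rank-one idempotents of the form $\begin{pmatrix}1&0\\y&0\end{pmatrix}$.
\end{itemize}

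The main obstacle is this middle case: $a,1-a$ both units and $b\in J(R)$. There I would note that $M(a,b)$ is conjugate to $M(a,0)$ via $\begin{pmatrix}1&a^{-1}b\\0&1\end{pmatrix}$. Then I would write $M(a,0)=E_1E_2$ with
\[
E_1=\begin{pmatrix}1&x\\0&0\end{pmatrix},\qquad E_2=\begin{pmatrix}1&0\\y&0\end{pmatrix},
\]
which gives $E_1E_2=\begin{pmatrix}1+xy&0\\0&0\end{pmatrix}$. Choosing $x=1$ and $y=a-1$ produces $M(a,0)$. This handles every $a$, so after conjugating to $M(a,0)$ whenever $a$ is a unit, the remaining cases are those with $a\in J(R)$. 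For those I would use the trace-$1$ construction when $b$ is a unit, and conjugation by Lemma~\ref{unimod}-type matrices otherwise, so that all cases are covered.
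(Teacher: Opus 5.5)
Your reduction is the paper's own: discard factors equal to $I$, dispose of a factor $0$, and otherwise write the product as $AEB$ and use Lemma~\ref{prodcontid} to place it in some $\OO_{M(a,b)}$. Everything then hinges on each $M(a,b)$ being a product of two idempotents. The paper gets this fact by citing \cite[Lemma 3.3]{dolzanid}. You try to prove it by hand, and that is where the proposal is incomplete.

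Your explicit constructions cover two cases:
\begin{itemize}
\item $a\in U(R)$: conjugate to $M(a,0)$, then use $M(a,0)=\begin{pmatrix}1&1\\0&0\end{pmatrix}\begin{pmatrix}1&0\\a-1&0\end{pmatrix}$;
\item $b\in U(R)$: use the trace-one idempotent with $c=b^{-1}a(1-a)$.
\end{itemize}
For $a,b\in J(R)$, for instance $M(2,4)$ or $M(4,2)$ over $\mathbb Z_8$, you only say ``conjugation by Lemma~\ref{unimod}-type matrices''. That is not an argument. The idempotents in your third bullet do not help either: products of $M(1,x)$ with $\begin{pmatrix}1&0\\y&0\end{pmatrix}$, in either order, only give $M(1+xy,0)$ or (after forcing the second row to vanish) $M(1,x)$. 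Also, $M(0,z)$ is idempotent only when $z=0$.

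The missing idea is to use your two constructions under divisibility hypotheses rather than unit hypotheses.
\begin{itemize}
\item With $P=\begin{pmatrix}1&s\\0&1\end{pmatrix}$ one has $PM(a,0)P^{-1}=M(a,-as)$. So $M(a,b)\in\OO_{M(a,0)}$ whenever $b\in(a)$, and your factorization of $M(a,0)$ is valid for every $a$.
\item If instead $a=sb$, then $c=s(1-a)$ satisfies $bc=a(1-a)$. So $F=\begin{pmatrix}a&b\\c&1-a\end{pmatrix}$ has trace $1$ and determinant $0$, hence $F^2=F$ by Cayley--Hamilton, and $M(a,b)=\begin{pmatrix}1&0\\0&0\end{pmatrix}F$.
\end{itemize}
Because the ideals of $R$ are linearly ordered, $b\in(a)$ or $a\in(b)$ always holds. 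These two cases therefore exhaust all pairs, including your unit cases and $a=b=0$. With this in place your argument is complete and, unlike the paper's, self-contained.
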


\begin{proof}
The inclusion
$\langle\mathcal I\rangle\cup\langle\mathcal N\rangle
    \subseteq
    \langle\mathcal I\cup\mathcal N\rangle$
is obvious.

Conversely, let $A=A_1A_2\cdots A_s$ for some $ A_i\in\mathcal I\cup\mathcal N$.
If all the idempotent factors occurring in this product are equal
to $I$, then they can be omitted. If at least one nilpotent factor
occurs, then $A$ is a product of nilpotents and hence
$A\in\langle\mathcal N\rangle$; otherwise $A=I\in\langle\mathcal I\rangle$.
If one of the idempotent factors is $0$, then $A=0$, and in particular
$A\in\langle\mathcal I\rangle$.

It remains to consider the case where the product contains a
nontrivial idempotent $E$. Write
$A=BEC$
for some $B,C\in M_2(R)$. By Lemma \ref{prodcontid}, there exist
$a,b\in R$ such that $A\in \OO_{M(a,b)}$.
However, every matrix belonging to $\OO_{M(a,b)}$ is a product of two
idempotents by \cite[Lemma 3.3]{dolzanid}. Consequently $A\in\langle\mathcal I\rangle$.
\end{proof}

The following lemma uses the idea of \cite[Lemma 3.3]{dolzanid}.

\begin{Lemma}
\label{mabne}
Let $R$ be a finite commutative local principal ring. Then every
matrix $M(a,b)$ is both an IN-matrix and an NI-matrix. Moreover, it
admits both a nontrivial IN-factorization and a nontrivial
NI-factorization.
\end{Lemma}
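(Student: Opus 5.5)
Writing $M(a,b)$ explicitly as $EN$ and as $NE$ will do the job, with $E$ a conjugate of $M(1,0)$ and $N$ a square-zero matrix. The natural candidate is a rank-one idempotent whose image is $Re_1$ (for IN), or whose kernel is spanned by a vector killed by $M(a,b)$ (for NI). The rows of $M(a,b)$ suggest the following choices.

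For the IN-factorization I take $E=\begin{pmatrix}1&x\\0&0\end{pmatrix}$, which is idempotent for every $x\in R$. Then $EN$ has first row $(n_1+xn_3,\;n_2+xn_4)$ and second row zero. Choosing $N=\begin{pmatrix}0&b\\0&0\end{pmatrix}$ and $x$ with $xn_3$ handling $a$ is awkward, so instead I try $N=\begin{pmatrix}n_1&n_2\\n_3&-n_1\end{pmatrix}$ with $n_1^2+n_2n_3=0$, and solve $n_1+xn_3=a$, $n_2-xn_1=b$. Taking $x=1$ and $n_3=a-n_1$, I need $n_2=b+n_1$ and $n_1^2+(b+n_1)(a-n_1)=0$, which reduces to $n_1(a-b)+ab=0$. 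That is not always solvable, so I take the cleaner route below.

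Let $E=\begin{pmatrix}1&-1\\0&0\end{pmatrix}$ and $N=\begin{pmatrix}\alpha&\beta\\ \alpha&\beta\end{pmatrix}$ with $\alpha+\beta=0$, so that $N^2=(\alpha+\beta)N=0$. Then $EN=0$, which is useless. Instead I use the general principle: $N=\begin{pmatrix}u\\v\end{pmatrix}\begin{pmatrix}w&z\end{pmatrix}$ is nilpotent when $uw+vz=0$, and $EN=E\begin{pmatrix}u\\v\end{pmatrix}\begin{pmatrix}w&z\end{pmatrix}$. With $E=\begin{pmatrix}1&1\\0&0\end{pmatrix}$ we get $E(u,v)^T=(u+v,0)^T$. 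I take $(u,v)=(1,-1)$, so $u+v=0$, which also fails. So take $(u,v)=(0,1)$ and $(w,z)=(a,b)$. Nilpotency needs $z=b=0$, which again fails in general. I therefore use a rank-one nilpotent $N=\begin{pmatrix}-b&b\\-b&b\end{pmatrix}$, i.e.\ $(1,1)^T(-b,b)$, plus a correction term. Concretely, I will set
\[
E=\begin{pmatrix}1&0\\1&0\end{pmatrix},\qquad N=\begin{pmatrix}a&b\\ -a-\frac{a^2}{b}&\cdots\end{pmatrix},
\]
and the division here is impossible in general. This shows the real obstacle: the solution must be uniform in $a,b$ with no divisions. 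The fix I expect to work is to allow $E=\begin{pmatrix}1&0\\ y&0\end{pmatrix}$ and a general $N$. Then $EN=\begin{pmatrix}n_1&n_2\\ yn_1&yn_2\end{pmatrix}$, which has the wrong shape. So I keep $E=M(1,x)$ and $N=\begin{pmatrix}a-x n_3& b+xa-x^2n_3\\ n_3 & -a+xn_3\end{pmatrix}$, which has trace $0$. I then need $\det N=0$, and a short computation gives $\det N=-a^2-bn_3$ after cancellation. The plan is to choose $x$ so that the cancellation is favourable. Choosing $n_3=0$ gives the requirement $a=0$, so in general I take $x=1$ and get $N=\begin{pmatrix}a-n&b+a-n\\ n&n-a\end{pmatrix}$ with $\det N=-(a-n)^2-n(a+b-n)$. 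Setting $n=a$ gives $\det N=-a b$, still not zero in general.

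Given these failures, the step I expect to be the main obstacle is finding a division-free universal choice. What I will actually do, following \cite[Lemma 3.3]{dolzanid}, is first reduce modulo conjugation and transposition: the classes of IN- and NI-matrices are conjugation-invariant, and transposition swaps them. Then I split into cases according to whether $a$ is a unit, or $(b)\subseteq(a)$ with $a\in J(R)$, or $(a)\subsetneq(b)$, using the linear order of ideals as in Lemma~\ref{unimod}. In each case I conjugate $M(a,b)$ to a normal form, such as $M(a,0)$ or $\begin{pmatrix}a&b\\0&0\end{pmatrix}$ with $b\mid a$, where an explicit factorization exists. For example,
\[
M(0,b)=M(1,0)\begin{pmatrix}0&b\\0&0\end{pmatrix}=\begin{pmatrix}0&b\\0&0\end{pmatrix}\begin{pmatrix}0&0\\0&1\end{pmatrix},
\]
and for $a=sb$ I use $E=M(1,0)$ and $N=\begin{pmatrix}sb&b\\ -s^2b&-sb\end{pmatrix}$, which has trace $0$ and determinant $0$ and satisfies $EN=M(a,b)$. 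In the case $b=ta$ I first conjugate by $\begin{pmatrix}1&0\\ -t&1\end{pmatrix}$-type matrices to move into the previous case. Nontriviality will then be checked directly: $E$ is conjugate to $M(1,0)$, and $N\neq0$ unless $a=b=0$. When $a=b=0$ I take $N=\begin{pmatrix}0&0\\1&0\end{pmatrix}$ with $E=M(1,0)$, which gives $EN=0$. The NI-factorization will follow by transposing and conjugating back, since $M(a,b)^T$ is conjugate to a matrix of the form $M(a',b')$ by Lemma~\ref{prodcontid}.
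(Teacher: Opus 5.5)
Your factorization for $a=sb$, namely $M(a,b)=M(1,0)\begin{pmatrix}sb&b\\-s^2b&-sb\end{pmatrix}$, coincides with the paper's $E_1N_1$ in the case $k\le l$ (there $c=-s^2b$ for $s=vu^{-1}x^{l-k}$). Your handling of $a=b=0$, and of the NI-part via transposition and Lemma~\ref{prodcontid}, also matches the paper.

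The gap is the complementary case $(b)\subsetneq(a)$, i.e.\ $b=ta$ with $a\notin(b)$. This case contains every $M(a,b)$ with $a$ a unit and $b\in J(R)$, for instance $M(1,0)$ itself.

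Your step ``conjugate by $\begin{pmatrix}1&0\\-t&1\end{pmatrix}$-type matrices to move into the previous case'' fails. Conjugating $M(a,b)$ by $\begin{pmatrix}1&0\\r&1\end{pmatrix}$ gives $\begin{pmatrix}a-rb&b\\r(a-rb)&rb\end{pmatrix}$. This has the shape $M(\cdot,\cdot)$ only when $ra=rb=0$, and then it is just $M(a,b)$ again.

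The shape-preserving conjugation is $\begin{pmatrix}1&s\\0&1\end{pmatrix}M(a,b)\begin{pmatrix}1&-s\\0&1\end{pmatrix}=M(a,b-sa)$. The obvious choice $s=t$ (or your lower-triangular matrix applied to $M(a,ta)^T$) lands on $M(a,0)$. That matrix is \emph{not} in your previous case, because $a\in(0)$ forces $a=0$.

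You list $M(a,0)$ among the normal forms ``where an explicit factorization exists'', but none of your formulas covers it when $a\neq0$. So exactly the case that needs an extra idea is left open. Since your NI-argument needs the IN-factorization of an arbitrary $M(c,d)$, it inherits the gap.

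The missing idea is to aim at $M(a,a)$ rather than $M(a,0)$. Since $b-a\in(a)$, taking $s=t-1$ shows that $M(a,b)$ is conjugate to $M(a,a)$; the paper quotes \cite[Lemma~3.2]{dolzanid} for $\OO_{M(a,b)}=\OO_{M(a,a)}$. Then $M(a,a)=M(1,0)\begin{pmatrix}a&a\\-a&-a\end{pmatrix}$ is your formula with $s=1$.

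Incidentally, your abandoned ansatz $E=M(1,x)$ would have done the job directly. The determinant of your $N$ is $-a^2+n_3(ax-b)$, not $-a^2-bn_3$. So for $b=ta$ the choice $x=t-1$, $n_3=-a$ gives
\[
M(a,ta)=\begin{pmatrix}1&t-1\\0&0\end{pmatrix}\begin{pmatrix}ta&t^2a\\-a&-ta\end{pmatrix},
\]
an idempotent times a square-zero matrix that is nonzero for $a\neq0$. For $t=0$ this reads $M(a,0)=M(1,-1)\begin{pmatrix}0&0\\-a&0\end{pmatrix}$.
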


\begin{proof}
Denote $J(R)=(x)$. 
We first prove that $M(a,b)$ is a product of a nontrivial idempotent
and a nonzero nilpotent.
If $a\neq0$, let
    $a\in J(R)^l\setminus J(R)^{l+1}$,
and set $l=n$ if $a=0$. Similarly, if $b\neq0$, let 
    $b\in J(R)^k\setminus J(R)^{k+1}$,
and set $k=n$ if $b=0$.

Assume first that $k\leq l$ and that $(a,b)\neq(0,0)$. Choose $u,v\in U(R)$ such that
\[
    a=vx^l,\qquad b=ux^k;
\]
when $a=0$ or $b=0$, the corresponding unit may be chosen
arbitrarily. Define $c=-u^{-1}v^2x^{2l-k}$.
Since $k\leq l$, we have $2l-k\geq0$, and
\[
    bc
    =(ux^k)(-u^{-1}v^2x^{2l-k})
    =-v^2x^{2l}
    =-a^2.
\]
Hence $N_1=
    \begin{pmatrix}
        a&b\\
        c&-a
    \end{pmatrix}$
satisfies
    $N_1^2=
    \begin{pmatrix}
        a^2+bc&0\\
        0&a^2+bc
    \end{pmatrix}
    =0$.
Thus $N_1$ is nilpotent. Since $(a,b)\neq(0,0)$, it is nonzero.
Taking
    $E_1=
    \begin{pmatrix}
        1&0\\
        0&0
    \end{pmatrix}$,
which is a nontrivial idempotent, we obtain
    $E_1N_1
    =
    \begin{pmatrix}
        a&b\\
        0&0
    \end{pmatrix}
    =M(a,b)$.

If $l<k$, then
\[
    a-b=\left(1-uv^{-1}x^{k-l}\right)a\in(a),
\]
so by \cite[Lemma~3.2]{dolzanid}, $\OO_{M(a,b)}=\OO_{M(a,a)}$.
Now
    $M(a,a)
    =
    \begin{pmatrix}
        1&0\\
        0&0
    \end{pmatrix}
    \begin{pmatrix}
        a&a\\
        -a&-a
    \end{pmatrix}$,
and
    $\begin{pmatrix}
        a&a\\
        -a&-a
    \end{pmatrix}^2=0$.
If $a\neq0$, the nilpotent factor is nonzero. Therefore
$M(a,a)$ is a product of a nontrivial idempotent and a nonzero
nilpotent, and the same holds for $M(a,b)$ after conjugation.

Finally, if $a=b=0$, then
    $M(0,0)
    =
    \begin{pmatrix}
        1&0\\
        0&0
    \end{pmatrix}
    \begin{pmatrix}
        0&0\\
        1&0
    \end{pmatrix}$,
where the first factor is a nontrivial idempotent and the second is a
nonzero nilpotent. 

Thus in every case
     $M(a,b)=E_1N_1$
with $E_1$ nontrivial and $N_1$ nonzero nilpotent.

It remains to prove the factorization in the opposite order. 
Observe that we have
    $M(a,b)^T
    =
    \begin{pmatrix}
        a&0\\
        b&0
    \end{pmatrix}
    \begin{pmatrix}
        1&0\\
        0&0
    \end{pmatrix}$,
so Lemma \ref{prodcontid} implies that
    $M(a,b)^T\in \OO_{M(c,d)}$
for some $c,d\in R$. By the first part of the proof, $M(c,d)$ is a
product of a nontrivial idempotent and a nonzero nilpotent. Since
conjugation preserves nonzero nilpotents and nontrivial idempotents,
there exist a nontrivial idempotent $F$ and a nonzero nilpotent $N$
such that
    $M(a,b)^T=FN$.
Taking transposes gives
    $M(a,b)=N^TF^T$.
Since transposition preserves nilpotency, nonzeroness and
idempotency, $N^T$ is a nonzero nilpotent and $F^T$ is a nontrivial
idempotent. Hence
    $M(a,b)=N_2E_2$
for suitable nonzero nilpotent $N_2$ and nontrivial idempotent $E_2$.
\end{proof}

%
%
%

\begin{Corollary}
Let $R$ be a finite commutative local principal ring. Then every
product of nilpotent and idempotent matrices which contains at least
one nontrivial idempotent factor is
both an IN-matrix and an NI-matrix. Moreover, it admits both a
nontrivial IN-factorization and a nontrivial NI-factorization.
\end{Corollary}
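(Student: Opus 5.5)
The plan is to combine Lemma \ref{prodcontid} with Lemma \ref{mabne}, transporting the factorizations along a conjugation. Let $A=A_1A_2\cdots A_s$ with each $A_i\in\mathcal I\cup\mathcal N$, and suppose some factor $A_j=E$ is a nontrivial idempotent. Set $B=A_1\cdots A_{j-1}$ and $C=A_{j+1}\cdots A_s$, with empty products equal to $I$, so that $A=BEC$. Lemma \ref{prodcontid} then gives $a,b\in R$ with $A\in\OO_{M(a,b)}$. In other words, $A=P^{-1}M(a,b)P$ for some $P\in GL_2(R)$.

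Next, Lemma \ref{mabne} supplies factorizations $M(a,b)=E_1N_1=N_2E_2$. Here $E_1,E_2$ are nontrivial idempotents and $N_1,N_2$ are nonzero nilpotents. Conjugating, we get
\[
A=(P^{-1}E_1P)(P^{-1}N_1P)=(P^{-1}N_2P)(P^{-1}E_2P).
\]

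It remains to check that each conjugated factor has the required property. Conjugation by an invertible matrix is a ring automorphism of $M_2(R)$, so it maps idempotents to idempotents and nilpotents to nilpotents. Being a bijection that fixes $0$ and $I$, it also preserves nontriviality of idempotents and nonzeroness of nilpotents. Hence $A=E'N'$ is a nontrivial IN-factorization and $A=N''E''$ is a nontrivial NI-factorization. In particular, $A$ is both an IN-matrix and an NI-matrix.

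I do not expect any real obstacle. The only point needing care is the bookkeeping at the ends of the product, namely allowing $B$ or $C$ to be $I$, which Lemma \ref{prodcontid} covers since it holds for arbitrary $A,B\in M_2(R)$.
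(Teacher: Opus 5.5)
Your proposal is correct and follows the paper's proof: you write the product as $BEC$, use Lemma \ref{prodcontid} to place it in some orbit $\OO_{M(a,b)}$, and transport the nontrivial factorizations of $M(a,b)$ from Lemma \ref{mabne} back along the conjugation. Your remarks about allowing $B=I$ or $C=I$, and about conjugation preserving nontrivial idempotents and nonzero nilpotents, just make explicit what the paper leaves implicit.
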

\begin{proof}
 Choose such a product $A$. Since a factorization of $A$ contains a
nontrivial idempotent, Lemma \ref{prodcontid} implies that
$A\in \OO_{M(a,b)}$
for some $a,b\in R$. Since conjugation preserves both nilpotency and
idempotency, it suffices to consider the case $A=M(a,b)$. The result thus follows from Lemma \ref{mabne}.
\end{proof}

We are now in a position to characterize the IN- and NI-matrices in
$M_2(R)$ and, in particular, to show that the two classes coincide. In fact, we obtain an
explicit description of their common class.

\begin{Theorem}
\label{main}
Let $R$ be a finite commutative local principal ring. Denote by
$\operatorname{IN}(M_2(R))$ and $\operatorname{NI}(M_2(R))$ the sets
of all IN- and NI-matrices in $M_2(R)$, respectively. Then
\[
    \operatorname{IN}(M_2(R))
    =
    \operatorname{NI}(M_2(R))
    =
    N(M_2(R))
    \cup
    \bigcup_{a,b\in R} \OO_{M(a,b)}.
\]
\end{Theorem}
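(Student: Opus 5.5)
The plan is to prove the two inclusions $\operatorname{IN}(M_2(R))\subseteq S$ and $S\subseteq \operatorname{IN}(M_2(R))$, where
\[
S=N(M_2(R))\cup\bigcup_{a,b\in R}\OO_{M(a,b)},
\]
and then the same two inclusions with $\operatorname{NI}(M_2(R))$ in place of $\operatorname{IN}(M_2(R))$.

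For the inclusion into $S$, let $A=EN$ with $E$ idempotent and $N$ nilpotent, and split according to $E$. If $E=0$, then $A=0$, which is nilpotent. If $E=I$, then $A=N\in N(M_2(R))$. If $E$ is a nontrivial idempotent, apply Lemma \ref{prodcontid} with the left factor $I$ and the right factor $N$. This gives $A=IEN\in\OO_{M(a,b)}$ for some $a,b\in R$. The case $A=NE$ is identical, using Lemma \ref{prodcontid} with left factor $N$ and right factor $I$.

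For the reverse inclusion, every nilpotent $N$ satisfies $N=IN=NI$, so it is both an IN-matrix and an NI-matrix. Now take $A\in\OO_{M(a,b)}$, say $A=P^{-1}M(a,b)P$. By Lemma \ref{mabne}, $M(a,b)=E_1N_1=N_2E_2$ with $E_i$ idempotent and $N_i$ nilpotent. Conjugating gives
\[
A=(P^{-1}E_1P)(P^{-1}N_1P)=(P^{-1}N_2P)(P^{-1}E_2P),
\]
and conjugation preserves both idempotency and nilpotency, so $A$ is both an IN-matrix and an NI-matrix. Combining the two inclusions shows that both classes equal $S$, and hence they coincide.

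I expect no real obstacle here, since all the substantive work has already been done in Lemmas \ref{prodcontid} and \ref{mabne}. The only points needing care are the trivial idempotents $0$ and $I$, which must be handled separately because Lemma \ref{prodcontid} assumes $E$ is nontrivial.
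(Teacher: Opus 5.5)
Your proposal is correct and follows essentially the same route as the paper: you prove the forward inclusion by splitting into $E=0$, $E=I$, or $E$ nontrivial (using Lemma \ref{prodcontid} in the last case), and the reverse inclusion via $N=IN=NI$ together with Lemma \ref{mabne} and invariance under conjugation. You also make the NI case explicit by taking left factor $N$ and right factor $I$ in Lemma \ref{prodcontid}, a step the paper leaves as ``analogous.''
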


\begin{proof}
We first prove that
    $\operatorname{IN}(M_2(R))
    \subseteq
    N(M_2(R))
    \cup
    \bigcup_{a,b\in R} \OO_{M(a,b)}$.
Let $A\in\operatorname{IN}(M_2(R))$. Then
$A=EN$
for some idempotent matrix $E\in M_2(R)$ and some nilpotent matrix
$N\in M_2(R)$.

If $E=0$, then $A=0$, and hence $A\in N(M_2(R))$. If $E=I$, then
$A=N$, so again $A\in N(M_2(R))$.
Suppose now that $E$ is a nontrivial idempotent. By Lemma \ref{prodcontid},
there exist $a,b\in R$
such that $A=EN\in \OO_{M(a,b)}$.
Consequently,
    $\operatorname{IN}(M_2(R))
    \subseteq
    N(M_2(R))
    \cup
    \bigcup_{a,b\in R}\OO_{M(a,b)}$.

Conversely, every nilpotent matrix $N$ is trivially an IN-matrix,
since $N=IN$.
Moreover, by Lemma \ref{mabne}, every matrix $M(a,b)$ is an IN-matrix.
Since the property of being an IN-matrix is preserved under
conjugation, every matrix in $\OO_{M(a,b)}$ is an IN-matrix. Therefore
    $N(M_2(R))
    \cup
    \bigcup_{a,b\in R}\OO_{M(a,b)}
    \subseteq
    \operatorname{IN}(M_2(R))$ and thus $\operatorname{IN}(M_2(R))
    =
    N(M_2(R))
    \cup
    \bigcup_{a,b\in R}\OO_{M(a,b)}$.

The argument for NI-matrices is analogous.
%
%
%
Consequently,
\[
    \operatorname{IN}(M_2(R))
    =
    \operatorname{NI}(M_2(R)) = N(M_2(R))  \cup  \bigcup_{a,b\in R}\OO_{M(a,b)},
\]
as claimed.
\end{proof}

\begin{Example}
The set of nilpotent matrices in the preceding theorem cannot, in
general, be omitted. Let $R=\mathbb Z_4$ and consider
\[
    A=
    \begin{pmatrix}
        2&2\\
        0&2
    \end{pmatrix}.
\]
Since $A^2=0$,
the matrix $A$ is nilpotent and is therefore trivially both an
IN-matrix and an NI-matrix.

We claim, however, that
    $A\notin
    \bigcup_{a,b\in\mathbb Z_4}\OO_{M(a,b)}$.
Suppose to the contrary that $A$ is conjugate to $M(a,b)$.
Reducing the conjugacy relation modulo $2$, and observing that
$\overline A=0$, we obtain
    $M(\overline a,\overline b)=0$.
Hence $a,b\in2\mathbb Z_4$. Moreover, invariance of the trace gives
    $a=\operatorname{tr}(M(a,b))
     =\operatorname{tr}(A)=0$.
Thus $A$ would have to be conjugate either to $M(0,0)$ or to
$M(0,2)$.
Clearly $A$ is not conjugate to $M(0,0)=0$. Furthermore,
    $|\ker A|=4$,
whereas
    $|\ker M(0,2)|=8$.
Since conjugate matrices have isomorphic kernels, this is impossible.
Therefore
    $A\notin
    \bigcup_{a,b\in\mathbb Z_4}\OO_{M(a,b)}$.

Thus $A$ is both an IN-matrix and an NI-matrix without belonging to
any of the orbits $\OO_{M(a,b)}$. This shows that the nilpotent term in
\[
    \operatorname{IN}(M_2(R))
    =
    \operatorname{NI}(M_2(R))
    =
    N(M_2(R))
    \cup
    \bigcup_{a,b\in R}\OO_{M(a,b)}
\]
is essential.
\end{Example}

\begin{Lemma}
    \label{orderofo}
    Let $R$ be a finite commutative local principal ring of cardinality
$q^n$, where $R/J(R)\cong GF(q)$. Then $|\bigcup_{a,b \in R} \OO_{M(a,b)}|=\frac{q^{3n}(q+1)^2-q}{q^2+q+1}$.
\end{Lemma}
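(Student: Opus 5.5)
The plan is to identify $\bigcup_{a,b\in R}\OO_{M(a,b)}$ with an explicitly parametrized set of rank-one matrices and then count that set by stratifying according to the ideal generated by the entries.

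\textbf{Step 1: a parametrization.} Put $S=\{uw^T : u\in R^2 \text{ unimodular},\ w\in R^2\}$. I claim that $\bigcup_{a,b}\OO_{M(a,b)}=S$.
\begin{itemize}
\item For one inclusion, note that $M(a,b)=e_1(a,b)$. Hence $P^{-1}M(a,b)P=(P^{-1}e_1)\bigl((a,b)P\bigr)$, and $P^{-1}e_1$ is unimodular.
\item Conversely, let $u$ be unimodular. Over a local ring $u$ can be completed to a basis, so there is $Q\in GL_2(R)$ with $Qu=e_1$; this is also the $t=1$ case of Lemma~\ref{unimod}. Then $Q(uw^T)Q^{-1}=e_1(w^TQ^{-1})$, which is of the form $M(a,b)$.
\end{itemize}

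\textbf{Step 2: stratification by content.} Write $J(R)=(x)$ as in Lemma~\ref{pid}. Every $w\neq 0$ can be written as $w=x^kw_0$ with $w_0$ unimodular and $0\le k<n$. Since $u$ is unimodular, the ideal generated by the entries of $uw^T$ equals the ideal generated by the entries of $w$, namely $J(R)^k$. So the strata $S_k=\{x^kB : B\in S_0\}$, where $S_0$ is the set of \emph{primitive} matrices $u w_0^T$ with $u,w_0$ both unimodular, are pairwise disjoint. The remaining stratum $S_n=\{0\}$ comes from $w=0$.

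\textbf{Step 3: counting $S_0$.} The number of unimodular vectors in $R^2$ is $q^{2n}-q^{2n-2}$, the number of vectors not lying in $J(R)^2$. If $uw_0^T=u'w_0'^T$, then multiplying by a vector $y$ with $w_0^Ty=1$ gives $u'=\lambda u$ with $\lambda$ a unit. From this, $w_0'=\lambda^{-1}w_0$. Thus $S_0$ is in bijection with pairs $(u,w_0)$ modulo $U(R)$, and
\[
|S_0|=\frac{(q^{2n-2}(q^2-1))^2}{q^{n-1}(q-1)}=q^{3n-3}(q-1)(q+1)^2 .
\]

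\textbf{Step 4: counting $S_k$ for $1\le k<n$.} This is the main technical point. I use the identity $x^kB=x^kB'$ if and only if $B-B'\in M_2(J(R)^{n-k})$. Hence $|S_k|$ equals the number of primitive matrices over $R'=R/J(R)^{n-k}$, which is local principal of order $q^{n-k}$ with residue field $GF(q)$. Here I must check that the reduction map $S_0\to S_0(R')$ is onto, i.e.\ that every primitive matrix over $R'$ lifts. This holds because unimodular vectors over $R'$ lift to unimodular vectors over $R$, and the primitive matrix is determined by the reductions of $u$ and $w_0$.

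Applying Step 3 to $R'$ gives $|S_k|=q^{3(n-k)-3}(q-1)(q+1)^2$.

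\textbf{Step 5: summation.} Summing over all strata, with $m=n-k$,
\[
1+(q-1)(q+1)^2\sum_{m=1}^{n}q^{3m-3}=1+\frac{(q+1)^2(q^{3n}-1)}{q^2+q+1}=\frac{q^{3n}(q+1)^2-q}{q^2+q+1},
\]
using $(q+1)^2-(q^2+q+1)=q$. This is the claimed formula.
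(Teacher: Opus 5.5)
Your proof is correct, and it takes a genuinely different route from the paper. The paper gives no independent argument for this lemma. It imports the number from the proof of \cite[Theorem 3.8]{dolzanid}, which is presumably an orbit-by-orbit computation like the one the paper itself carries out in its final theorem for $a\in J(R)$. That computation needs three inputs: the classification of the orbits, with representatives $M(a,0)$ and $M(a,x^k)$ for $k<l$, via \cite[Corollary 3.7]{dolzanid}; the individual orbit sizes $q^{2n-2l-1}(q+1)$ and $q^{2(n-k-1)}(q^2-1)$ from \cite[Lemma 3.5]{dolzanid}; and a summation over $a$, using trace invariance and Lemma~\ref{pid}. You bypass all of this by identifying the union intrinsically as $\{uw^T : u \text{ unimodular}\}$. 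The count then reduces to a free $U(R)$-action on pairs of unimodular vectors, and the content stratification together with passage to $R/J(R)^{n-k}$ collapses everything into one geometric series. The orbit approach yields finer information, namely the size of each orbit, which the paper reuses later. Yours is shorter and self-contained, and it shows exactly where principality is used: only in writing $w=x^kw_0$ and in $\operatorname{Ann}(x^k)=J(R)^{n-k}$. It would even handle the paper's later intersection count. Since $(uw^T)^2=(w^Tu)\,uw^T$, the matrix $uw^T$ is nilpotent exactly when $w^Tu\in J(R)$, and that condition is easy to count in your parametrization.

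A few minor points. In Step~3, ``$J(R)^2$'' must mean $J(R)\times J(R)$, not the square of the ideal. The identity $\operatorname{Ann}(x^k)=J(R)^{n-k}$ deserves a line: the inclusion $\supseteq$ is clear, and both sides have $q^k$ elements by Lemma~\ref{pid}. Finally, Lemma~\ref{unimod} applied to a unimodular $u$ gives $Pu=te_1$ with $t$ a unit rather than literally $t=1$, but your basis-completion argument already suffices.
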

\begin{proof}
    The proof of \cite[Theorem 3.8]{dolzanid} establishes that $|\bigcup_{a,b \in R} \OO_{M(a,b)}|+1=\frac{q^2+1+q^{3n}(q+1)^2}{q^2+q+1}$.
\end{proof}

\begin{Lemma}
\label{orderofnil}
Let $R$ be a finite commutative local ring of cardinality $q^n$, where
$R/J(R)\cong GF(q)$. Then
    $|N(M_2(R))|=q^{4n-2}$.
\end{Lemma}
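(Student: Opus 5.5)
We count nilpotent matrices in $M_2(R)$ by combining the field case with the radical. Reduction modulo $J=J(R)$ gives a ring homomorphism $\pi\colon M_2(R)\to M_2(GF(q))$, and every matrix in $M_2(J)$ is nilpotent because $J$ is nilpotent. Since $R$ is local, $A$ is nilpotent exactly when $\pi(A)$ is nilpotent. Indeed, if $\pi(A)^k=0$, then $A^k\in M_2(J)$, and that ideal is nilpotent. Therefore
\[
N(M_2(R))=\pi^{-1}\bigl(N(M_2(GF(q)))\bigr),
\]
and it suffices to count the nilpotents over the residue field and multiply by the fibre size.

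Each fibre of $\pi$ is a coset of $M_2(J)$, so it has $|J|^4=q^{4(n-1)}$ elements. Here we use the fact, recalled in Section~2 (or Lemma~\ref{pid}), that $|J(R)|=q^{n-1}$. Note that principality is not needed for this count.

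Next, we count the nilpotent $2\times2$ matrices over $GF(q)$. Such a matrix $N$ satisfies $\Tr(N)=0$ and $\det N=0$, so it has the form $\begin{pmatrix}\alpha&\beta\\ \gamma&-\alpha\end{pmatrix}$ with $\alpha^2+\beta\gamma=0$.
\begin{itemize}
\item If $\beta\neq0$, then $\gamma=-\alpha^2/\beta$ is determined, which gives $q(q-1)$ solutions.
\item If $\beta=0$, then $\alpha=0$ and $\gamma$ is arbitrary, which gives $q$ solutions.
\end{itemize}
The total is $q^2$. This matches the classical Fine--Herstein count $q^{m^2-m}$ with $m=2$. The same count also follows from the orbit method: there are $(q^2-1)$ nonzero nilpotents, each conjugate to $E_{12}$ with centralizer of order $q(q-1)$, which is consistent with $|GL_2(q)|/(q(q-1))=q^2-1$.

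Multiplying the two counts gives $|N(M_2(R))|=q^2\cdot q^{4n-4}=q^{4n-2}$. The only point needing care is the equivalence between nilpotency of $A$ and nilpotency of $\pi(A)$, and the argument above settles it by the nilpotency of the ideal $M_2(J)$. Everything else is a direct computation.
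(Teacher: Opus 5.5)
Your proposal is correct and is essentially the paper's proof: you identify $N(M_2(R))$ with $\pi^{-1}\bigl(N(M_2(GF(q)))\bigr)$ using the nilpotency of $M_2(J(R))$, then multiply the fibre size $|J(R)|^4=q^{4n-4}$ by the $q^2$ nilpotents over the residue field. The only differences are that you verify the count $q^2$ directly, which the paper simply asserts, and that you work with a general exponent $k$ where the paper uses $\pi(A)^2=0$.
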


\begin{proof}
Let
\[
    \pi:M_2(R)\longrightarrow M_2(R/J(R))\cong M_2(GF(q))
\]
be the natural projection. If $A\in M_2(R)$ is nilpotent then $\pi(A)$ is nilpotent. 
Conversely, if $\pi(A)$ is nilpotent, then $\pi(A)^2=0$, and hence $A^2\in M_2(J(R))$.
Since $J(R)$ is nilpotent, it follows that $A$ is nilpotent.

There are $q^2$ nilpotent matrices in $M_2(GF(q))$, and each of them
has $|J(R)|^4=q^{4n-4}$
preimages under $\pi$. Consequently, 
    $|N(M_2(R))|
    =q^2q^{4n-4}
    =q^{4n-2}$.
\end{proof}

\begin{Theorem}
Let $R$ be a finite commutative local principal ring of cardinality
$q^n$, where $R/J(R)\cong GF(q)$. Then
\[
    |\operatorname{IN}(M_2(R))|
    =
    |\operatorname{NI}(M_2(R))|
    =
    q^{3n-2}(q^n+q^2-1).
\]
\end{Theorem}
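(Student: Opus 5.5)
By Theorem \ref{main} it suffices to count the single set $\operatorname{IN}(M_2(R))=N(M_2(R))\cup\bigcup_{a,b}\OO_{M(a,b)}$; the equality $|\operatorname{IN}|=|\operatorname{NI}|$ is then automatic. Rather than using inclusion--exclusion with Lemma \ref{orderofo}, which would force me to count nilpotent elements of the orbits, I will write the union as the disjoint union of $N(M_2(R))$ and the set $\mathcal U'$ of \emph{non-nilpotent} matrices in $\bigcup_{a,b}\OO_{M(a,b)}$. Then
\[
|\operatorname{IN}(M_2(R))| = q^{4n-2}+|\mathcal U'|
\]
by Lemma \ref{orderofnil}. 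Since $q^{3n-2}(q^n+q^2-1)=q^{4n-2}+q^{3n-2}(q^2-1)$, the goal becomes $|\mathcal U'|=q^{3n-2}(q^2-1)$.

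\emph{Step 1: nilpotency of $M(a,b)$.} We have $M(a,b)^2=aM(a,b)$. Hence $M(a,b)$ is nilpotent if and only if $a\in J(R)$: if $a$ is a unit, the powers $a^{k-1}M(a,b)$ never vanish. Trace is a conjugation invariant and $\Tr(M(a,b))=a$, so a matrix in $\OO_{M(a,b)}$ is non-nilpotent exactly when its trace is a unit. Thus $\mathcal U'$ is the union of the orbits $\OO_{M(a,b)}$ with $a\in U(R)$.

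\emph{Step 2: describe $\mathcal U'$.} For $a\in U(R)$, write $M(a,b)=a\,M(1,a^{-1}b)$. The matrix $M(1,s)$ is a nontrivial idempotent, so it is conjugate to $M(1,0)$, as used in Lemma \ref{prodcontid}. Hence $\mathcal U'=\{aE : a\in U(R),\ E \text{ a nontrivial idempotent}\}$. The representation $aE$ is unique, because $a=\Tr(aE)$ since $\Tr(E)=1$, and then $E=a^{-1}(aE)$. Therefore $|\mathcal U'|=|U(R)|\cdot|\{\text{nontrivial idempotents}\}|$.

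\emph{Step 3: count.} By Lemma \ref{pid}, $|U(R)|=q^n-q^{n-1}=q^{n-1}(q-1)$. The nontrivial idempotents form the orbit of $M(1,0)$. I expect the only real work to be the centralizer of $M(1,0)$ in $GL_2(R)$. A matrix commuting with $M(1,0)$ must be diagonal, so the centralizer consists of the diagonal matrices with unit entries and has order $|U(R)|^2$. For $|GL_2(R)|$, I use that a matrix is invertible iff its reduction mod $J(R)$ is invertible. This gives $|GL_2(R)|=q^{4n-4}(q^2-1)(q^2-q)$. The number of nontrivial idempotents is then
\[
\frac{q^{4n-4}(q^2-1)(q^2-q)}{q^{2n-2}(q-1)^2}=q^{2n-1}(q+1).
\]
Multiplying, $|\mathcal U'|=q^{n-1}(q-1)\,q^{2n-1}(q+1)=q^{3n-2}(q^2-1)$, which gives the stated formula.
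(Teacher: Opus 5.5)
Your proof is correct, and it takes a genuinely different and shorter route than the paper. The paper uses inclusion--exclusion, $|N(M_2(R))|+|\mathcal O|-|N(M_2(R))\cap\mathcal O|$. It imports $|\mathcal O|$ from \cite[Theorem 3.8]{dolzanid}. It then computes $|N(M_2(R))\cap\mathcal O|=\bigl|\bigcup_{a\in J(R),\,b\in R}\OO_{M(a,b)}\bigr|$ using the orbit classification \cite[Corollary 3.7]{dolzanid}, the orbit sizes \cite[Lemma 3.5]{dolzanid}, Lemma \ref{pid}, and a fairly long geometric-sum calculation.

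You count only the complement $\mathcal O\setminus N(M_2(R))$. You observe that it is exactly the set of matrices $aE$ with $a\in U(R)$ and $E$ a nontrivial idempotent, and that this representation is unique via $\Tr(aE)=a$. Orbit--stabilizer for $M(1,0)$ then gives $|U(R)|\cdot q^{2n-1}(q+1)=q^{3n-2}(q^2-1)$ directly. This agrees with the paper: the difference of its two fractions is $q^{3n-2}(q+1)(q^3-1)/(q^2+q+1)=q^{3n-2}(q^2-1)$.

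What your approach buys:
\begin{itemize}
\item It never touches the structure of the nilpotent orbits $\OO_{M(a,b)}$ with $a\in J(R)$.
\item It is self-contained within this paper, needing only Theorem \ref{main}, Lemma \ref{orderofnil}, and orbit--stabilizer.
\item It uses the principal-ring hypothesis only through Theorem \ref{main}; the counting step itself works over any finite commutative local ring.
\end{itemize}
What the paper's route buys is extra information your argument does not give: the sizes of $\mathcal O$ and of $N(M_2(R))\cap\mathcal O$ separately. In particular, it tells how many nilpotent matrices lie in the orbits $\OO_{M(a,b)}$, which is relevant to the phenomenon illustrated in the Example.
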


\begin{proof}
By Theorem \ref{main}, we have
    $\operatorname{IN}(M_2(R))
    =
    \operatorname{NI}(M_2(R))
    =
    N(M_2(R))
    \cup
    \bigcup_{a,b\in R}\OO_{M(a,b)}$.
Put
    $\mathcal O=\bigcup_{a,b\in R}\OO_{M(a,b)}$.
It follows that
    $|\operatorname{NI}(M_2(R))|
    =
    |N(M_2(R))|+|\mathcal O|
    -|N(M_2(R))\cap\mathcal O|$.
By Lemma \ref{orderofnil}, we have $|N(M_2(R))|=q^{4n-2}$.
Furthermore, Lemma \ref{orderofo} gives
    $|\mathcal O|
    =
    \frac{q^{3n}(q+1)^2-q}{q^2+q+1}$.

It remains to determine the intersection
$N(M_2(R))\cap\mathcal O$. Observe that, for every $m\geq1$,
    $M(a,b)^m
    =
    \begin{pmatrix}
        a^m&a^{m-1}b\\
        0&0
    \end{pmatrix}$.
Since the set of nilpotent elements of $R$ is $J(R)$, it follows that
\[
    M(a,b)\text{ is nilpotent}
    \quad\Longleftrightarrow\quad
    a\in J(R).
\]
As nilpotency is preserved under conjugation, we therefore have
\[
    N(M_2(R))\cap\mathcal O
    =
    \bigcup_{\substack{a\in J(R)\\b\in R}}\OO_{M(a,b)}.
\]

Let $J(R)=(x)$. For $a\neq0$, write
$a\in J(R)^l\setminus J(R)^{l+1}$ for some $0\le l\le n-1$.
If $b\neq0$, write
$b\in J(R)^k\setminus J(R)^{k+1}$ for some $0\le k\le n-1$,
and set $k=n$ when $b=0$.
   Using \cite[Corollary 3.7]{dolzanid}, we arrive at 
   $\OO_{M(a,b)}=\OO_{M(a,0)}$ if $k \geq l$ and $\OO_{M(a,b)}=\OO_{M(a,x^k)}$ if $k < l$. This implies
   $$\left|\bigcup_{b \in R} \OO_{M(a,b)}\right|=\left|\OO_{M(a,0)}\right|+\sum_{k=0}^{l-1} \left|\OO_{M(a,x^k)}\right|.$$
   From \cite[Lemma 3.5]{dolzanid} we therefore have 
   $$\left|\bigcup_{b \in R} \OO_{M(a,b)}\right|=q^{2n-2l-1}(q+1)+\sum_{k=0}^{l-1}q^{2(n-k-1)}(q^{2}-1), \text { if } a\neq 0, \text { and }$$
   $$\left|\bigcup_{b \in R} \OO_{M(0,b)}\right|=1+\sum_{k=0}^{n-1}q^{2(n-k-1)}(q^{2}-1).$$
      Thus summing the corresponding
disjoint orbits over $a\in J(R)$ (the orbits corresponding to distinct values of $a$ are disjoint
by invariance of the trace) and using Lemma \ref{pid}, we obtain 
   \begin{multline*}
         \left|
    \bigcup_{\substack{a\in J(R)\\b\in R}}\OO_{M(a,b)}
    \right|
    =1+\sum_{k=0}^{n-1}q^{2(n-k-1)}(q^{2}-1)+\\ \sum_{l=1}^{n-1}\left(q^{n-1-l}(q-1)\left(q^{2n-2l-1}(q+1)+\sum_{k=0}^{l-1}q^{2(n-k-1)}(q^{2}-1)\right)\right).
   \end{multline*}
A direct calculation, using
\[
    \sum_{k=0}^{l-1}q^{2(n-k-1)}(q^2-1)
    =q^{2n}-q^{2n-2l},
\]
shows that
\[
\begin{aligned}
&1+\sum_{k=0}^{n-1}q^{2(n-k-1)}(q^2-1)
+\sum_{l=1}^{n-1}q^{n-1-l}(q-1)
\left(q^{2n-2l-1}(q+1)
+\sum_{k=0}^{l-1}q^{2(n-k-1)}(q^2-1)\right)\\
&\qquad
=q^{2n}
+(q-1)\sum_{l=1}^{n-1}q^{3n-3l-2}(q^{2l+1}+1)\\
&\qquad
=q^{3n-1}
+\frac{q(q^{3n-3}-1)}{q^2+q+1}\\
&\qquad
=\frac{q^{3n-2}(q+1)(q^2+1)-q}{q^2+q+1}.
\end{aligned}
\]
Consequently,
\[
\begin{aligned}
    |\operatorname{NI}(M_2(R))|
    &=
    q^{4n-2}
    +
    \frac{q^{3n}(q+1)^2-q}{q^2+q+1}\\
    &\qquad
    -
    \frac{q^{3n-2}(q+1)(q^2+1)-q}
         {q^2+q+1}\\
    &=
    q^{3n-2}(q^n+q^2-1).
\end{aligned}
\]
Since $\operatorname{IN}(M_2(R))
    =
    \operatorname{NI}(M_2(R))$,
the same formula holds for the number of IN-matrices.
\end{proof}

\bigskip

{\bf Statements and Declarations.} The author states that there are no competing interests. 

\bigskip

\bibliographystyle{amsplain}
\bibliography{biblio}

\bigskip

\end{document}